\newtheorem{theorem}{Theorem}
\theoremstyle{plain}
\newtheorem{definition}{Definition}
\newtheorem{lemma}{Lemma}
\numberwithin{equation}{section}
\begin{document}
\title[Isophote curves on timelike surfaces]{Isophote curves on timelike
surfaces in Minkowski 3-space}
\author{Fatih Do\u{g}an}
\address{\textit{Current Adress: Fatih} \textit{DO\u{G}AN}, \textit{Ankara
University, Department of Mathematics, 06100 Tando\u{g}an, Ankara, Turkey}}
\email{mathfdogan@hotmail.com}
\date{}
\subjclass[2000]{51B20, 53A35, 53B30}
\keywords{Isophote curve; Spacelike curve; Timelike curve; Timelike surface;
Geodesic; Slant helix}

\begin{abstract}
Isophote comprises a locus of the surface points whose normal vectors make a
constant angle with a fixed vector. In this paper, isophote curves are
studied on timelike surfaces in Minkowski 3-space $E_{1}^{3}$. The axises of
spacelike and timelike isophote curves are found via their Darboux frames.
Subsequently, the relationship between isophotes and slant helices is shown
on timelike surfaces.
\end{abstract}

\maketitle

\section{Introduction}

Isophote is one of the characteristic curves on a surface such as parameter,
geodesic and asymptotic curves or lines of curvature.

Isophote on a surface can be regarded as a nice consequence of Lambert's
cosine law in optics branch of physics. Lambert's law states that the
intensity of illumination on a diffuse surface is proportional to the cosine
of the angle generated between the surface normal vector $N$ and the light
vector $d$. According to this law the intensity is irrespective of the
actual viewpoint, hence the illumination is the same when viewed from any
direction $[19]$. In other words, isophotes of a surface are curves with the
property that their points have the same light intensity from a given source
(a curve of constant illumination intensity). When the source light is at
infinity, we may consider that the light flow consists in parallel lines.
Hence, we can give a geometric description of isophotes on surfaces, namely
they are curves such that the surface normal vectors in points of the curve
make a constant angle with a fixed direction (which represents the light
direction). These curves are succesfully used in computer graphics but also
it is interesting to study for geometry.\newline
Then, to find an isophote on a surface we use the formula%
\begin{equation*}
\frac{\left \langle N(u,v),d\right \rangle }{\left \Vert N(u,v)\right \Vert }%
=\cos \theta ,\text{ }0\leq \theta \leq \frac{\pi }{2}.
\end{equation*}%
where $d$ is the light (fixed) vector and $\theta $ is the constant angle
between the surface normal vector $N$ and $d$.

Koenderink and van Doorn $[9]$ studied the field of constant image
brightness contours (isophotes). They showed that the spherical image (the
Gauss map) of an isophote is a latitude circle on the unit sphere $S^{2}$
and the problem was reduced to that of obtaining the inverse Gauss map of
these circles. By means of this they defined two kind singularities of the
Gauss map: folds (curves) and simple cusps (apex, antapex points) and there
are structural properties of the field of isophotes that bear an invariant
relation to geometric features of the object.

Poeschl $[16]$ used isophotes in car body construction via detecting
irregularities along these curves on a free form surface. These
irregularities emerge by differentiating of the equation $\left \langle
N(u,v),l\right \rangle =\cos \theta =c$ (constant)%
\begin{equation*}
\left \langle N_{u},l\right \rangle du+\left \langle N_{v},l\right \rangle
dv=0
\end{equation*}%
\begin{equation*}
\frac{dv}{du}=-\frac{\left \langle N_{u},l\right \rangle }{\left \langle
N_{v},l\right \rangle },\text{ }\left \langle N_{v},l\right \rangle \neq 0,
\end{equation*}%
where $l$ ($d$) is the light vector.

Sara $[18]$ researched local shading of a surface through isophotes
properties. By using fundamental theory of surfaces, he focused on accurate
estimation of surface normal tilt and on qualitatively correct Gaussian
curvature recovery.

Kim and Lee $[8]$ parameterized isophotes for surface of rotation and canal
surface. They utilized that both these surfaces decompose into a set of
circles where the surface normal vectors at points on each circle construct
a cone. Again the vectors that make a constant angle with the fixed vector $%
d $ construct another cone and thus tangential intersection of these cones
give the parametric range of the connected component isophote.

Dillen $et$ $al$. $[2]$ studied the constant angle surfaces in the product
space $\mathbb{S}^{2}\times 
\mathbb{R}
$ for which the unit normal makes a constant angle with the $%
\mathbb{R}
$-direction. Then Dillen and Munteanu $[3]$ investigated the same problem in 
$\mathbb{H}^{2}\times 
\mathbb{R}
$ where $\mathbb{H}^{2}$ is the hyperbolic plane. Again, Nistor $[13]$
researched normal, binormal and tangent developable surfaces of the space
curve from standpoint of constant angle surface. Recently, Munteanu and
Nistor $[12]$ gave an important characterization about constant angle
surfaces and studied the constant angle surfaces taking with a fixed vector
direction being the tangent direction to $%
\mathbb{R}
$ in Euclidean $3$-space. Thus, it can be said that all curves on a constant
angle surface are isophote curves.

Izumiya and Takeuchi $[6]$ defined a slant helix as a space curve that the
principal normal lines make a constant angle with a fixed direction. They
displayed that a certain slant helix is also a geodesic on the tangent
developable surface of a general helix.

Recently, Ali and Lopez $[1]$ looked into slant helices in Lorentz-Minkowski
space $E_{1}^{3}$. They gave characterizations as to slant helix and its
axis in $E_{1}^{3}$.

More recently, Do\u{g}an and Yayl\i \ $[4]$ have investigated isophote
curves in the Euclidean space $E^{3}$. Also, they $[5]$ studied isophote
curves on spacelike surfaces in $E_{1}^{3}$. In both papers they viewed that
the close relation between isophote curves and special curves on the
surfaces. For instance, an isophote can be generated by a curve which is
both geodesic and slant helix.

This time we study isophote curves on timelike surfaces in $E_{1}^{3}$. The
present paper is organized as follows. We give basic concepts concerning
curve and surface theory in section 2. In section 3 and 4, we focus on
finding the axises of spacelike and timelike isophote curves lying on
timelike surfaces. Finally, we give main theorems for these curves in
section 5.

\section{Preliminaries}

First of all, we begin to introduce Minkowski 3-space. Later, we mention
some fundamental concepts of curves and surfaces in the Minkowski 3-space $%
E_{1}^{3}$. The space $R_{1}^{3}$ is a three dimensional real vector space
endowed with the inner product%
\begin{equation*}
\left \langle x,y\right \rangle =-x_{1}y_{1}+x_{2}y_{2}+x_{3}y_{3}.
\end{equation*}%
This space is called Minkowski 3-space and denoted by $E_{1}^{3}$. A vector
in this space is said to be spacelike, timelike and lightlike (null) if $%
\left \langle x,x\right \rangle >0$ or $x=0$, $\left \langle
x,x\right
\rangle <0 $ and $\left \langle x,x\right \rangle =0$ or $x\neq 0$%
, respectively. Again, a regular curve $\alpha :I\longrightarrow $ $%
E_{1}^{3} $ is called spacelike, timelike and lightlike if the velocity
vector $\alpha ^{^{\prime }}$ is spacelike, timelike and lightlike,
respectively $[10]$.\newline
The Lorentzian cross product of $x=(x_{1},x_{2},x_{3})$ and $%
y=(y_{1},y_{2},y_{3})$ in $R_{1}^{3}$ is defined as follows.%
\begin{equation*}
x\times y=%
\begin{vmatrix}
e_{1} & -e_{2} & -e_{3} \\ 
x_{1} & x_{2} & x_{3} \\ 
y_{1} & y_{2} & y_{3}%
\end{vmatrix}%
=\left(
x_{2}y_{3}-x_{3}y_{2},x_{1}y_{3}-x_{3}y_{1},x_{2}y_{1}-x_{1}y_{2}\right) ,
\end{equation*}%
where $\delta _{ij}$ is kronecker delta, $e_{i}=(\delta _{i1},\delta
_{i2},\delta _{i3})$ and $e_{1}\times e_{2}=-e_{3}$, $e_{2}\times
e_{3}=e_{1} $, $e_{3}\times e_{1}=-e_{2}$.

Let $\{t,n,b\}$ be the moving Frenet frame along the curve $\alpha $ with
arclenght parameter $s$. For a spacelike curve $\alpha $, the Frenet-Serret
equations are%
\begin{equation*}
\begin{bmatrix}
t^{^{\prime }} \\ 
n^{^{\prime }} \\ 
b^{^{\prime }}%
\end{bmatrix}%
=%
\begin{bmatrix}
0 & \kappa & 0 \\ 
-\varepsilon \kappa & 0 & \tau \\ 
0 & \tau & 0%
\end{bmatrix}%
\begin{bmatrix}
t \\ 
n \\ 
b%
\end{bmatrix}%
,
\end{equation*}%
where $\left \langle t,t\right \rangle =1$, $\left \langle n,n\right \rangle
=\pm 1$, $\left \langle b,b\right \rangle =-\varepsilon $, $\left \langle
t,n\right \rangle =\left \langle t,b\right \rangle =\left \langle
n,b\right
\rangle =0$ and $\kappa $ is the curvature and $\tau $ is the
torsion of $\alpha $. Here, $\varepsilon $ determines the kind of spacelike
curve $\alpha $. If $\varepsilon =1$, then $\alpha (s)$ is a spacelike curve
with spacelike principal normal $n$ and timelike binormal $b$. If $%
\varepsilon =-1 $, then $\alpha (s)$ is a spacelike curve with timelike
principal normal $n$ and spacelike binormal $b$.

If the curve $\alpha $ is a timelike, the Frenet-Serret equations are%
\begin{equation*}
\begin{bmatrix}
t^{^{\prime }} \\ 
n^{^{\prime }} \\ 
b^{^{\prime }}%
\end{bmatrix}%
=%
\begin{bmatrix}
0 & \kappa & 0 \\ 
\kappa & 0 & \tau \\ 
0 & -\tau & 0%
\end{bmatrix}%
\begin{bmatrix}
t \\ 
n \\ 
b%
\end{bmatrix}%
\end{equation*}%
where $\left \langle t,t\right \rangle =-1$, $\left \langle
n,n\right
\rangle =\left \langle b,b\right \rangle =1$, $\left \langle
t,n\right
\rangle =\left \langle t,b\right \rangle =\left \langle
n,b\right
\rangle =0$ $[20]$.

\begin{definition}[{[14]}]
Let $v$ and $w$ be spacelike vectors.\newline
\textbf{(a)} If $v$ and $w$ span a timelike vector subspace, then there is a
unique non-negative real number $\theta \geq 0$ such that%
\begin{equation}
\left \langle v,\omega \right \rangle =\left \Vert v\right \Vert \left \Vert
w\right \Vert \cosh \theta .  \tag{2.1}
\end{equation}%
\textbf{(b)} If $v$ and $w$ span a spacelike vector subspace, then there is
a unique non-negative real number $\theta \geq 0$ such that%
\begin{equation}
\left \langle v,\omega \right \rangle =\left \Vert v\right \Vert \left \Vert
w\right \Vert \cos \theta .  \tag{2.2}
\end{equation}
\end{definition}

\begin{definition}[{[14]}]
Let $v$ be a spacelike vector and $w$ be a timelike vector in $R_{1}^{3}$.
Then, there is a unique non-negative real number $\theta \geq 0$ such that%
\begin{equation}
\left \langle v,w\right \rangle =\left \Vert v\right \Vert \left \Vert
w\right \Vert \sinh \theta .  \tag{2.3}
\end{equation}
\end{definition}

\begin{definition}[{[14]}]
Let $v$ and $w$ be in the same timecone of $R_{1}^{3}$. Then, there is a
unique real number $\theta \geq 0$, called the hyperbolic angle between $v$
and $w$ such that%
\begin{equation}
\left \langle v,\omega \right \rangle =-\left \Vert v\right \Vert \left
\Vert w\right \Vert \cosh \theta .  \tag{2.4}
\end{equation}
\end{definition}

\begin{lemma}
In the Minkowski 3-space $E_{1}^{3}$, we have the following $[20]$.\newline
(i) Two timelike vectors cannot be orthogonal.\newline
(ii) Two null vectors are orthogonal if and only if they are linearly
dependent.\newline
(iii) A timelike vector cannot be orthogonal to a null (lightlike) vector.
\end{lemma}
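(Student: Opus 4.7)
The plan is to reduce each of the three assertions to a short verification after placing the relevant vectors in canonical position via a Lorentz isometry, or alternatively to invoke the reverse Cauchy--Schwarz inequality for the Lorentzian form. None of the three parts requires any analytic content beyond this; the content is entirely linear-algebraic.

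For (i), I would either apply the reverse Cauchy--Schwarz inequality $\langle u,v\rangle^{2}\geq\langle u,u\rangle\langle v,v\rangle$ valid for timelike $u,v$, and observe that the right-hand side is a product of two strictly negative numbers and is therefore strictly positive, so $\langle u,v\rangle$ cannot vanish; or, equivalently, normalize $u$ to $(u_{1},0,0)$ with $u_{1}\neq 0$ and note that $\langle v,v\rangle<0$ forces $v_{1}^{2}>v_{2}^{2}+v_{3}^{2}\geq 0$, so $v_{1}\neq 0$ and $\langle u,v\rangle=-u_{1}v_{1}\neq 0$.

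For (ii), the implication $v=\lambda u\Rightarrow\langle u,v\rangle=0$ is immediate since $\langle u,u\rangle=0$. For the converse I would normalize $u$ to the standard null vector $(1,1,0)$ and write $v=(a,b,c)$; the nullity relation $-a^{2}+b^{2}+c^{2}=0$ together with the orthogonality relation $-a+b=0$ gives $a=b$ and then $c=0$, so $v=au$ is a scalar multiple of $u$. For (iii), placing a timelike $u$ in the form $(1,0,0)$ reduces orthogonality to $v_{1}=0$, and then the null condition $-v_{1}^{2}+v_{2}^{2}+v_{3}^{2}=0$ forces $v_{2}=v_{3}=0$, so $v=0$, contradicting the convention that a null vector be nonzero.

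The only step requiring any care is justifying the coordinate normalizations, namely that the Lorentz group acts transitively on timelike unit vectors and on null lines; this is a standard fact, and in part (i) it can be bypassed altogether by the inequality argument. I do not anticipate a genuine obstacle here, so the ``hard part'' is really just choosing a presentation (inequality versus normal form) that keeps the three items uniform.
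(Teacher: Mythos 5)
Your proof is correct. Note that the paper itself offers no argument for this lemma: it is quoted verbatim from the reference cited as $[20]$ (ultimately a standard fact from O'Neill's semi-Riemannian geometry), so there is no in-paper proof to compare against. Each of your three verifications checks out under the paper's convention $\left\langle x,y\right\rangle =-x_{1}y_{1}+x_{2}y_{2}+x_{3}y_{3}$: the reverse Cauchy--Schwarz route for (i) is sound (and not circular, since that inequality is proved by decomposing along a timelike vector, not by invoking (i)); the normal forms $(1,0,0)$ and $(1,1,0)$ are attainable because the Lorentz group acts transitively on timelike unit vectors and on null directions, as you flag; and the computations in (ii) and (iii) are immediate. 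The only cosmetic suggestion is that parts (i) and (iii) admit a single unifying statement that avoids both the inequality and the normalization: the orthogonal complement of a timelike vector is a positive-definite plane, hence contains no timelike and no nonzero null vector; your part (ii) computation then handles the one genuinely separate case, where the orthogonal complement of a null vector is degenerate with radical exactly the null line itself.
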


Let $M$ be a regular timelike surface in $E_{1}^{3}$ and let $\alpha
:I\subset 
\mathbb{R}
\longrightarrow M$ be a unit speed spacelike curve. Then, Darboux frame $%
\{T, $ $B=N\times T,$ $N\}$ is well-defined and positively oriented along
the curve $\alpha $ where $T$ is the tangent of $\alpha $ and $N$ is the
unit normal of $M$. In this case, the Darboux equations are given by%
\begin{gather}
T^{^{\prime }}=k_{g}B-k_{n}N  \tag{2.5} \\
B^{^{\prime }}=k_{g}T+\tau _{g}N  \notag \\
N^{^{\prime }}=k_{n}T+\tau _{g}B,  \notag
\end{gather}%
where $k_{n}$, $k_{g}$ and $\tau _{g}$ are the normal curvature, the
geodesic curvature and the geodesic torsion of $\alpha $, respectively and $%
\left \langle T,T\right \rangle =\left \langle N,N\right \rangle
=\left
\langle n,n\right \rangle =1$, $\left \langle B,B\right \rangle =-1$%
. Then, by using Eq.(2.5) we get%
\begin{gather}
\kappa ^{2}=k_{n}^{2}-k_{g}^{2}  \tag{2.6} \\
k_{g}=\kappa \sinh \phi  \notag \\
k_{n}=\kappa \cosh \phi  \notag \\
\tau _{g}=\tau +\phi ^{^{\prime }},  \notag
\end{gather}%
where $\phi $ is the angle between the surface normal vector $N$ and the
principal normal $n$ of $\alpha $.

If $\alpha :I\subset 
\mathbb{R}
\longrightarrow M$ is a timelike curve, then the Darboux equations are given
by%
\begin{gather}
T^{^{\prime }}=k_{g}B+k_{n}N  \tag{2.7} \\
B^{^{\prime }}=k_{g}T-\tau _{g}N  \notag \\
N^{^{\prime }}=k_{n}T+\tau _{g}B,  \notag
\end{gather}%
where $\left \langle T,T\right \rangle =-1$, $\left \langle
N,N\right
\rangle =\left \langle B,B\right \rangle =\left \langle
n,n\right
\rangle =1$. From Eq.(2.7) we get,%
\begin{gather}
\kappa ^{2}=k_{g}^{2}+k_{n}^{2}  \tag{2.8} \\
k_{g}=\kappa \cos \phi  \notag \\
k_{n}=\kappa \sin \phi  \notag \\
\tau _{g}=\tau +\phi ^{^{\prime }},  \notag
\end{gather}%
where $\phi $ is the angle between the surface normal vector $N$ and the
principal normal $n$ of $\alpha $.

\section{The axis of a spacelike isophote curve on timelike surfaces}

Now, we find the fixed vector (axis) of a spacelike isophote curve via its
Darboux frame. Let $M$ be a timelike surface and let $\alpha $ be a unit
speed spacelike isophote curve on $M$. Then, there are two cases for the
axis $d$ of $\alpha $.\newline
\textbf{The Case (1).} If the axis $d$ is spacelike vector, then from
Definition 1(a) and 1(b) we have%
\begin{equation*}
\left \langle N,d\right \rangle =\cosh \theta \text{ \  \  \  \ or \  \  \  \ }%
\left \langle N,d\right \rangle =\cos \beta .
\end{equation*}%
where $\theta $ and $\beta $ are the constant angles between the surface
normal vector $N$ and $d$, respectively.\newline
\textbf{(a)} Let\textbf{\ }$\left \langle N,d\right \rangle =\cosh \theta $.
If we differentiate this equation with respect to$\ s$ along the curve $%
\alpha $, by Eq.(2.5) we get%
\begin{gather*}
\left \langle N^{^{\prime }},d\right \rangle =0 \\
\left \langle k_{n}T+\tau _{g}B,d\right \rangle =0 \\
k_{n}\left \langle T,d\right \rangle +\tau _{g}\left \langle B,d\right
\rangle =0 \\
\left \langle T,d\right \rangle =-\frac{\tau _{g}}{k_{n}}\left \langle
B,d\right \rangle
\end{gather*}%
If we take $\left \langle B,d\right \rangle =a$, the axis $d$ can be written
as%
\begin{equation*}
d=-\frac{\tau _{g}}{k_{n}}aT-aB+\cosh \theta N,
\end{equation*}%
where $\left \langle T,T\right \rangle =\left \langle N,N\right \rangle =1$
and $\left \langle B,B\right \rangle =-1$. Since $d$ is spacelike, we obtain%
\begin{gather*}
\left \langle d,d\right \rangle =\frac{\tau _{g}^{2}}{k_{n}^{2}}%
a^{2}-a^{2}+\cosh ^{2}\theta =1 \\
(1-\frac{\tau _{g}^{2}}{k_{n}^{2}})a^{2}=\sinh ^{2}\theta \\
a=\mp \frac{k_{n}}{\sqrt{k_{n}^{2}-\tau _{g}^{2}}}\sinh \theta .
\end{gather*}%
By substituting this in the expression of $d$, we get the axis as%
\begin{equation}
d=\pm \frac{\tau _{g}}{\sqrt{k_{n}^{2}-\tau _{g}^{2}}}\sinh \theta T\pm 
\frac{k_{n}}{\sqrt{k_{n}^{2}-\tau _{g}^{2}}}\sinh \theta B+\cosh \theta N. 
\tag{3.1}
\end{equation}%
If we differentiate $N^{^{\prime }}$ with respect to$\ s$ and take inner
product with $d$, we conclude%
\begin{equation}
N^{^{\prime \prime }}=(k_{n}^{^{\prime }}+k_{g}\tau _{g})T+(\tau
_{g}^{^{\prime }}+k_{n}k_{g})B-(k_{n}^{2}-\tau _{g}^{2})N  \tag{3.2}
\end{equation}%
\begin{equation*}
\left \langle N^{^{\prime \prime }},d\right \rangle =\mp \frac{(\tau
_{g}^{^{\prime }}k_{n}-k_{n}^{^{\prime }}\tau _{g})+k_{g}(k_{n}^{2}-\tau
_{g}^{2})}{\sqrt{k_{n}^{2}-\tau _{g}^{2}}}\sinh \theta -(k_{n}^{2}-\tau
_{g}^{2})\cosh \theta =0
\end{equation*}%
\begin{gather}
\coth \theta =\mp \left[ \frac{\tau _{g}^{^{\prime }}k_{n}-k_{n}^{^{\prime
}}\tau _{g}}{(k_{n}^{2}-\tau _{g}^{2})^{\frac{3}{2}}}+\frac{k_{g}}{%
(k_{n}^{2}-\tau _{g}^{2})^{\frac{1}{2}}}\right]  \notag \\
=\mp \left[ \frac{k_{n}^{2}}{(k_{n}^{2}-\tau _{g}^{2})^{\frac{3}{2}}}\left( 
\frac{\tau _{g}}{k_{n}}\right) ^{^{\prime }}+\frac{k_{g}}{(k_{n}^{2}-\tau
_{g}^{2})^{\frac{1}{2}}}\right] .  \tag{3.3}
\end{gather}%
Indeed, $d$ is a constant vector. If we differentiate the vector $d$, from
Eq.(2.5) we get%
\begin{eqnarray*}
d^{^{\prime }} &=&\pm \sinh \theta \left[ (\frac{\tau _{g}}{\sqrt{%
k_{n}^{2}-\tau _{g}^{2}}})^{^{\prime }}T+\frac{\tau _{g}}{\sqrt{%
k_{n}^{2}-\tau _{g}^{2}}}(k_{g}B-k_{n}N)\right] \\
&&\pm \sinh \theta \left[ (\frac{k_{n}}{\sqrt{k_{n}^{2}-\tau _{g}^{2}}}%
)^{^{\prime }}B+\frac{k_{n}}{\sqrt{k_{n}^{2}-\tau _{g}^{2}}}(k_{g}T+\tau
_{g}B)\right] +\cosh \theta \left[ k_{n}T+\tau _{g}B\right]
\end{eqnarray*}%
By Eq.(3.3), we have%
\begin{equation*}
\cosh \theta =\pm \sinh \theta \left[ \frac{k_{n}^{^{\prime }}\tau _{g}-\tau
_{g}^{^{\prime }}k_{n}}{(k_{n}^{2}-\tau _{g}^{2})^{\frac{3}{2}}}-\frac{k_{g}%
}{(k_{n}^{2}-\tau _{g}^{2})^{\frac{1}{2}}}\right] .
\end{equation*}%
The last equality is replaced in the statement of $d^{^{\prime }}$, it
follows that%
\begin{eqnarray*}
d^{^{\prime }} &=&\pm \sinh \theta \left[ \frac{\tau _{g}^{^{\prime
}}(k_{n}^{2}-\tau _{g}^{2})-\tau _{g}(k_{n}k_{n}^{^{\prime }}-\tau _{g}\tau
_{g}^{^{\prime }})}{(k_{n}^{2}-\tau _{g}^{2})^{\frac{3}{2}}}+\frac{%
k_{n}k_{n}^{^{\prime }}\tau _{g}-k_{n}^{2}\tau _{g}^{^{\prime }}}{%
(k_{n}^{2}-\tau _{g}^{2})^{\frac{3}{2}}}\right] T \\
&&\pm \sinh \theta \left[ \frac{k_{n}^{^{\prime }}(k_{n}^{2}-\tau
_{g}^{2})-k_{n}(k_{n}k_{n}^{^{\prime }}-\tau _{g}\tau _{g}^{^{\prime }})}{%
(k_{n}^{2}-\tau _{g}^{2})^{\frac{3}{2}}}+\frac{k_{n}^{^{\prime }}\tau
_{g}^{2}-k_{n}\tau _{g}\tau _{g}^{^{\prime }}}{(k_{n}^{2}-\tau _{g}^{2})^{%
\frac{3}{2}}}\right] B.
\end{eqnarray*}%
As can be immediately seen above, the coefficients of $T$ and $B$ becomes
zero. Then $d^{^{\prime }}=0$ in other words $d$ is a constant vector.%
\newline
\textbf{(b)} Let $\left \langle N,d\right \rangle =\cos \beta $. In that
case, by Eq.(2.5) it concludes that%
\begin{equation*}
\left \langle T,d\right \rangle =-\frac{\tau _{g}}{k_{n}}\left \langle
B,d\right \rangle .
\end{equation*}%
If we take $\left \langle B,d\right \rangle =a$, the axis $d$ can be written
as%
\begin{equation*}
d=-\frac{\tau _{g}}{k_{n}}aT-aB+\cos \beta N.
\end{equation*}%
where $\left \langle T,T\right \rangle =\left \langle N,N\right \rangle =1$
and $\left \langle B,B\right \rangle =-1$. Since $d$ is spacelike, we obtain%
\begin{eqnarray*}
\left \langle d,d\right \rangle &=&\frac{\tau _{g}^{2}}{k_{n}^{2}}%
a^{2}-a^{2}+\cos ^{2}\beta =1 \\
a &=&\mp \frac{k_{n}}{\sqrt{\tau _{g}^{2}-k_{n}^{2}}}\sin \beta .
\end{eqnarray*}%
In this case, the axis $d$ becomes%
\begin{equation}
d=\pm \frac{\tau _{g}}{\sqrt{\tau _{g}^{2}-k_{n}^{2}}}\sin \beta T\pm \frac{%
k_{n}}{\sqrt{\tau _{g}^{2}-k_{n}^{2}}}\sin \beta B+\cos \beta N.  \tag{3.4}
\end{equation}%
From Eq.(3.2) we have,%
\begin{equation*}
N^{^{\prime \prime }}=(k_{n}^{^{\prime }}+k_{g}\tau _{g})T+(\tau
_{g}^{^{\prime }}+k_{n}k_{g})B-(k_{n}^{2}-\tau _{g}^{2})N.
\end{equation*}%
By taking inner product of $N^{^{\prime \prime }}$and $d$, we get%
\begin{equation*}
\left \langle N^{^{\prime \prime }},d\right \rangle =\mp \frac{(\tau
_{g}^{^{\prime }}k_{n}-k_{n}^{^{\prime }}\tau _{g})-k_{g}(\tau
_{g}^{2}-k_{n}^{2})}{\sqrt{\tau _{g}^{2}-k_{n}^{2}}}\sin \beta +(\tau
_{g}^{2}-k_{n}^{2})\cos \beta =0
\end{equation*}%
\begin{equation}
\cot \beta =\pm \left[ \frac{k_{n}^{2}}{(\tau _{g}^{2}-k_{n}^{2})^{\frac{3}{2%
}}}\left( \frac{\tau _{g}}{k_{n}}\right) ^{^{\prime }}-\frac{k_{g}}{(\tau
_{g}^{2}-k_{n}^{2})^{\frac{1}{2}}}\right] .  \tag{3.5}
\end{equation}%
If we differentiate Eq.(3.4) and then use Eq.(3.5), we obtain%
\begin{eqnarray*}
d^{^{\prime }} &=&\pm \sin \beta \left[ \frac{\tau _{g}^{^{\prime }}(\tau
_{g}^{2}-k_{n}^{2})-\tau _{g}(\tau _{g}\tau _{g}^{^{\prime
}}-k_{n}k_{n}^{^{\prime }})}{(\tau _{g}^{2}-k_{n}^{2})^{\frac{3}{2}}}+\frac{%
k_{n}^{2}\tau _{g}^{^{\prime }}-k_{n}k_{n}^{^{\prime }}\tau _{g}}{(\tau
_{g}^{2}-k_{n}^{2})^{\frac{3}{2}}}\right] T \\
&&\pm \sin \beta \left[ \frac{k_{n}^{^{\prime }}(\tau
_{g}^{2}-k_{n}^{2})-k_{n}(\tau _{g}\tau _{g}^{^{\prime
}}-k_{n}k_{n}^{^{\prime }})}{(k_{n}^{2}-\tau _{g}^{2})^{\frac{3}{2}}}+\frac{%
k_{n}\tau _{g}\tau _{g}^{^{\prime }}-k_{n}^{^{\prime }}\tau _{g}^{2}}{%
(k_{n}^{2}-\tau _{g}^{2})^{\frac{3}{2}}}\right] B.
\end{eqnarray*}%
Since the coefficients of $T$ and $B$ are zero, $d^{^{\prime }}=0$, i.e., $d$
is a constant vector.\newline
\textbf{The Case (2)} If the axis $d$ is a timelike vector, then from
Definition 2 we have%
\begin{equation*}
\left \langle N,d\right \rangle =\sinh \gamma ,
\end{equation*}%
where $\gamma $ is the constant angle between the surface normal vector $N$
and $d$. By doing computations similar to the case (1) we get%
\begin{gather}
d=\pm \frac{\tau _{g}}{\sqrt{k_{n}^{2}-\tau _{g}^{2}}}\cosh \gamma T\pm 
\frac{k_{n}}{\sqrt{k_{n}^{2}-\tau _{g}^{2}}}\cosh \gamma B+\sinh \gamma N 
\notag \\
\tanh \gamma =\mp \left[ \frac{k_{n}^{2}}{(k_{n}^{2}-\tau _{g}^{2})^{\frac{3%
}{2}}}\left( \frac{\tau _{g}}{k_{n}}\right) ^{^{\prime }}+\frac{k_{g}}{%
(k_{n}^{2}-\tau _{g}^{2})^{\frac{1}{2}}}\right]  \tag{3.6}
\end{gather}%
and again similar to proof of the case (1) it can be showed that $d$ is a
constant vector.

From now on, we will obtain the axis of timelike isophote curves on timelike
surfaces.

\section{The axis of a timelike isophote curve on timelike surfaces}

In this section, we find the fixed vector (axis) of a timelike isophote
curve via its Darboux frame. Let $M$ be a timelike surface and let $\alpha $
be a unit speed timelike isophote curve on $M$. Then, there are two cases
for the axis $d$ of $\alpha $.\newline
\textbf{The Case (3).} If the axis $d$ is spacelike, then from Definition
1(b) and 1(a) we have%
\begin{equation*}
\left \langle N,d\right \rangle =\cos \delta \text{ \  \  \  \ or \  \  \  \ }%
\left \langle N,d\right \rangle =\cosh \xi .
\end{equation*}%
where $\delta $ and $\xi $ are the constant angles between the surface
normal vector $N$ and $d$, respectively.\newline
\textbf{(a) }Let $\left \langle N,d\right \rangle =\cos \delta $. Then, from
Eq.(2.7) it follows that%
\begin{equation*}
\left \langle T,d\right \rangle =-\frac{\tau _{g}}{k_{n}}\left \langle
B,d\right \rangle .
\end{equation*}%
By taking $\left \langle B,d\right \rangle =a$, the axis $d$ can be written
as%
\begin{equation*}
d=\frac{\tau _{g}}{k_{n}}aT+aB+\cos \delta N,
\end{equation*}%
where $\left \langle T,T\right \rangle =-1$ and $\left \langle
N,N\right
\rangle =\left \langle B,B\right \rangle =1$. Since $d$ is
spacelike, we obtain%
\begin{equation*}
a=\pm \frac{k_{n}}{\sqrt{k_{n}^{2}-\tau _{g}^{2}}}\sin \delta .
\end{equation*}%
Then we have%
\begin{gather}
d=\pm \frac{\tau _{g}}{\sqrt{k_{n}^{2}-\tau _{g}^{2}}}\sin \delta T\pm \frac{%
k_{n}}{\sqrt{k_{n}^{2}-\tau _{g}^{2}}}\sin \delta B+\cos \delta N  \notag \\
\cot \delta =\mp \left[ \frac{k_{n}^{2}}{(k_{n}^{2}-\tau _{g}^{2})^{\frac{3}{%
2}}}\left( \frac{\tau _{g}}{k_{n}}\right) ^{^{\prime }}+\frac{k_{g}}{%
(k_{n}^{2}-\tau _{g}^{2})^{\frac{1}{2}}}\right] .  \tag{4.1}
\end{gather}%
Like preceding cases it can be showed that $d$ is a constant vector.\newline
\textbf{(b)} Let $\left \langle N,d\right \rangle =\cosh \xi $. Then we can
easily obtain%
\begin{gather}
d=\pm \frac{\tau _{g}}{\sqrt{\tau _{g}^{2}-k_{n}^{2}}}\sinh \xi T\pm \frac{%
k_{n}}{\sqrt{\tau _{g}^{2}-k_{n}^{2}}}\sinh \xi B+\cosh \xi N  \notag \\
\coth \xi =\pm \left[ \frac{k_{n}^{2}}{(\tau _{g}^{2}-k_{n}^{2})^{\frac{3}{2}%
}}\left( \frac{\tau _{g}}{k_{n}}\right) ^{^{\prime }}+\frac{k_{g}}{(\tau
_{g}^{2}-k_{n}^{2})^{\frac{1}{2}}}\right] .  \tag{4.2}
\end{gather}%
\textbf{The Case (4). }If the axis $d$ is a timelike vector, then from
Definition 2 we have%
\begin{equation*}
\left \langle N,d\right \rangle =\sinh \nu .
\end{equation*}%
where $\nu $ is the constant angle between the surface normal vector $N$ and 
$d$. In this situation, we get%
\begin{gather}
d=\pm \frac{\tau _{g}}{\sqrt{\tau _{g}^{2}-k_{n}^{2}}}\cosh \nu T\pm \frac{%
k_{n}}{\sqrt{\tau _{g}^{2}-k_{n}^{2}}}\cosh \nu B+\sinh \nu N  \notag \\
\coth \nu =\pm \left[ \frac{k_{n}^{2}}{(\tau _{g}^{2}-k_{n}^{2})^{\frac{3}{2}%
}}\left( \frac{\tau _{g}}{k_{n}}\right) ^{^{\prime }}-\frac{k_{g}}{(\tau
_{g}^{2}-k_{n}^{2})^{\frac{1}{2}}}\right] .  \tag{4.3}
\end{gather}

\section{Main theorems}

In this section, we give main theorems that characterize isophotes on
timelike surfaces. Moreover, we show the relationship between isophote
curves and slant helices on timelike surfaces.

\begin{theorem}
A unit speed spacelike curve on a timelike surface is an isophote curve if
and only if one of the following three functions%
\begin{eqnarray*}
(1)\text{ \ }\coth \theta &=&\eta (s)=\mp \left( \frac{k_{n}^{2}}{%
(k_{n}^{2}-\tau _{g}^{2})^{\frac{3}{2}}}\left( \frac{\tau _{g}}{k_{n}}%
\right) ^{^{\prime }}+\frac{k_{g}}{(k_{n}^{2}-\tau _{g}^{2})^{\frac{1}{2}}}%
\right) (s) \\
(2)\text{ \  \ }\cot \beta &=&\mu (s)=\pm \left( \frac{k_{n}^{2}}{(\tau
_{g}^{2}-k_{n}^{2})^{\frac{3}{2}}}\left( \frac{\tau _{g}}{k_{n}}\right)
^{^{\prime }}-\frac{k_{g}}{(\tau _{g}^{2}-k_{n}^{2})^{\frac{1}{2}}}\right)
(s) \\
(3)\text{ }\tanh \gamma &=&\psi (s)=\mp \left( \frac{k_{n}^{2}}{%
(k_{n}^{2}-\tau _{g}^{2})^{\frac{3}{2}}}\left( \frac{\tau _{g}}{k_{n}}%
\right) ^{^{\prime }}+\frac{k_{g}}{(k_{n}^{2}-\tau _{g}^{2})^{\frac{1}{2}}}%
\right) (s)
\end{eqnarray*}%
is a constant function (The case 1(a), the case 1(b) and the case 2,
respectively).
\end{theorem}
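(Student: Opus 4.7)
The plan is to organize the proof as a biconditional, treating each of the three cases (1), (2), (3) in parallel, because they all follow the same pattern. In fact, the necessary computations have been carried out implicitly in Section 3; the theorem is essentially a packaging of those derivations.

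For the forward direction, I would start by assuming that $\alpha$ is a unit speed spacelike isophote curve on the timelike surface $M$ with axis $d$. Using Definitions 1 and 2, I fall into exactly one of Case 1(a) ($d$ spacelike, $\langle N,d\rangle=\cosh\theta$), Case 1(b) ($d$ spacelike, $\langle N,d\rangle=\cos\beta$) or Case 2 ($d$ timelike, $\langle N,d\rangle=\sinh\gamma$). In each case I differentiate $\langle N,d\rangle=\mathrm{const}$ along $\alpha$ and apply the Darboux equations (2.5) to express $\langle T,d\rangle$ in terms of $\langle B,d\rangle$. A second differentiation, together with the causal character of $d$ and the formula (3.2) for $N''$, produces the scalar equations (3.3), (3.5) and (3.6), which are exactly the three constancy conditions $\coth\theta=\eta(s)$, $\cot\beta=\mu(s)$, $\tanh\gamma=\psi(s)$.

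For the converse, I would assume that one of $\eta$, $\mu$, $\psi$ is constant on $I$, and I would produce the axis explicitly. For instance, in case (1) assume $\eta(s)\equiv\coth\theta$ for some constant $\theta$, and define
\begin{equation*}
d(s)=\pm\frac{\tau_{g}}{\sqrt{k_{n}^{2}-\tau_{g}^{2}}}\sinh\theta\, T\pm\frac{k_{n}}{\sqrt{k_{n}^{2}-\tau_{g}^{2}}}\sinh\theta\, B+\cosh\theta\, N,
\end{equation*}
exactly as in (3.1). A direct computation, using (2.5) and substituting the assumed identity $\coth\theta=\eta(s)$ into the coefficients of $T$ and $B$ in $d'$, yields $d'=0$; this calculation is precisely the one displayed after (3.3) in Section 3, and it works in reverse. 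Hence $d$ is a constant vector, and $\langle N,d\rangle=\cosh\theta$ is constant, so $\alpha$ is an isophote curve whose axis is the spacelike vector $d$. Cases (2) and (3) are handled identically, replacing (3.1) by (3.4) and by the formula in (3.6), and using (3.5) or (3.6) respectively to kill the $T$- and $B$-components of $d'$.

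The one delicate point, and the main obstacle, is ensuring that the candidate axis $d$ is actually well defined and of the claimed causal type. This requires $k_{n}^{2}-\tau_{g}^{2}\neq 0$ in cases 1(a) and 2, and $\tau_{g}^{2}-k_{n}^{2}\neq 0$ in case 1(b); these nondegeneracy conditions are implicit in the hypothesis that the constancy equations (3.3), (3.5) and (3.6) make sense. I would note these as standing assumptions and then verify $\langle d,d\rangle=1$ in the spacelike cases and $\langle d,d\rangle=-1$ in the timelike case from the algebraic identity used to solve for $a=\langle B,d\rangle$ in Section 3. Once this is in place the equivalence is immediate, and the proof reduces to cataloguing the three cases.
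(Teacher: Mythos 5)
Your argument is correct, but it takes a genuinely different route from the paper's. You prove the equivalence directly: the forward implication by differentiating $\left\langle N,d\right\rangle =\mathrm{const}$ twice along $\alpha$, using the Darboux equations (2.5) and the causal character of $d$ to solve for its frame components, which reproduces Eqs.\ (3.3), (3.5) and (3.6); and the converse by exhibiting the candidate axis (3.1), (3.4) or (3.6) explicitly and checking $d^{\prime}=0$ from the assumed constancy of $\eta$, $\mu$ or $\psi$. The paper instead argues through the Gauss map: it computes the curvature of the spherical image $N_{\mid _{\alpha }}$ on the Lorentzian unit sphere $S_{1}^{2}$, identifies $\eta (s)$ as the geodesic curvature $\bar{k}_{g}$ of that image via $\kappa ^{2}=(\bar{k}_{g})^{2}-(\bar{k}_{n})^{2}$ with $\bar{k}_{n}=1$, and then invokes the fact that isophotes are exactly the curves whose Gauss-map images are circles, i.e.\ curves of constant geodesic curvature, on $S_{1}^{2}$. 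Your version is more elementary and self-contained: in particular your converse is fully explicit, producing the axis and verifying it is fixed, whereas the paper's converse leans on the circle characterization without proof. What the paper's approach buys is the geometric interpretation of $\eta$, $\mu$ and $\psi$ as geodesic curvatures of the spherical image. Your remark on the nondegeneracy conditions $k_{n}^{2}-\tau _{g}^{2}\neq 0$ (resp.\ $\tau _{g}^{2}-k_{n}^{2}\neq 0$) addresses a point the paper leaves implicit; the only further caveat worth recording in the converse is that the constant value must lie in the range of $\coth$, $\cot$ or $\tanh$ as appropriate, which is precisely what separates the three cases.
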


\begin{proof}
(1) Since $\alpha $ is an isophote, the Gauss map along the curve $\alpha $
is a circle on the Lorentzian unit sphere $S_{1}^{2}$. Hence, if we compute
the Gauss map $N_{\mid _{\alpha }}:I\longrightarrow $ $S_{1}^{2}$ along the
curve $\alpha $, the geodesic curvature of $N_{\mid _{\alpha }}$ becomes $%
\eta (s)$ as shown below.
\end{proof}

\begin{eqnarray*}
N_{\mid _{\alpha }}^{^{\prime }} &=&k_{n}T+\tau _{g}B \\
N_{\mid _{\alpha }}^{^{\prime \prime }} &=&(k_{n}^{^{\prime }}+k_{g}\tau
_{g})T+(k_{n}k_{g}+\tau _{g}^{^{\prime }})B-(k_{n}^{2}-\tau _{g}^{2})N \\
N_{\mid _{\alpha }}^{^{\prime }}\times N_{\mid _{\alpha }}^{^{\prime \prime
}} &=&-\tau _{g}(k_{n}^{2}-\tau _{g}^{2})T+k_{n}(k_{n}^{2}-\tau
_{g}^{2})B+(k_{g}(k_{n}^{2}-\tau _{g}^{2})+k_{n}^{2}(\frac{\tau _{g}}{k_{n}}%
)^{^{\prime }})N,
\end{eqnarray*}%
where $T\times B=N$, $B\times N=T$ and $N\times T=B$. Therefore, we obtain%
\begin{eqnarray*}
\kappa &=&\frac{\sqrt{\left \langle N_{\mid _{\alpha }}^{^{\prime }}\times
N_{\mid _{\alpha }}^{^{\prime \prime }},N_{\mid _{\alpha }}^{^{\prime
}}\times N_{\mid _{\alpha }}^{^{\prime \prime }}\right \rangle }}{\left
\Vert N_{\mid _{\alpha }}^{^{\prime }}\right \Vert ^{3}} \\
&=&\frac{\sqrt{-(k_{n}^{2}-\tau _{g}^{2})^{3}+\left( k_{g}(k_{n}^{2}-\tau
_{g}^{2})+k_{n}^{2}(\dfrac{\tau _{g}}{k_{n}})^{^{\prime }}\right) ^{2}}}{%
\sqrt{(k_{n}^{2}-\tau _{g}^{2})^{3}}} \\
&=&\sqrt{-1+\frac{\left( k_{g}(k_{n}^{2}-\tau _{g}^{2})+k_{n}^{2}(\dfrac{%
\tau _{g}}{k_{n}})^{^{\prime }}\right) ^{2}}{(k_{n}^{2}-\tau _{g}^{2})^{3}}}.
\end{eqnarray*}

Let $\overset{-}{k_{g}}$ and $\overset{-}{k_{n}}$ be the geodesic curvature
and the normal curvature of the Gauss map $N_{\mid _{\alpha }}$ on $%
S_{1}^{2} $, respectively. Since the normal curvature $\overset{-}{k_{n}}$ $%
=1$, if we substitute $\overset{-}{k_{n}}$ and $\kappa $ in the following
equation, we obtain the geodesic curvature $\overset{-}{k_{g}}$ as follows.%
\begin{equation*}
\kappa ^{2}=(\overset{-}{k_{g}})^{2}-(\overset{-}{k_{n}})^{2}
\end{equation*}%
\begin{equation*}
\overset{-}{k_{g}}(s)=\eta (s)=\coth \theta =\mp \left( \frac{k_{n}^{2}}{%
(k_{n}^{2}-\tau _{g}^{2})^{\frac{3}{2}}}\left( \frac{\tau _{g}}{k_{n}}%
\right) ^{^{\prime }}+\frac{k_{g}}{(k_{n}^{2}-\tau _{g}^{2})^{\frac{1}{2}}}%
\right) (s).
\end{equation*}%
where $\theta $ is the constant angle between the surface normal vector $N$
and $d$. In that case, the spherical images (Gauss maps) of isophotes are
circles if and only if one of three functions $\eta (s)$, $\mu (s)$ and $%
\psi (s)$ is a constant. The proofs for (2) and (3) can be done in the same
way.

\begin{theorem}
Let $\alpha $ be a unit speed spacelike curve in $E_{1}^{3}$. If the normal
vector of $\alpha $ is spacelike, then $\alpha $ is a slant helix if and
only if one of the two functions%
\begin{equation*}
\frac{\kappa ^{2}}{(\tau ^{2}-\kappa ^{2})^{\frac{3}{2}}}(\frac{\tau }{%
\kappa })^{^{\prime }}\text{ \  \  \  \ and \  \  \  \ }\frac{\kappa ^{2}}{(\kappa
^{2}-\tau ^{2})^{\frac{3}{2}}}(\frac{\tau }{\kappa })^{^{\prime }}
\end{equation*}%
is a constant everywhere $\tau ^{2}-\kappa ^{2}$ does not vanish $[1]$.
\end{theorem}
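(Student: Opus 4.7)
The plan is to follow the strategy of the proof of Theorem~1, but with the Frenet--Serret frame $\{t,n,b\}$ in place of the Darboux frame and the principal normal $n$ in place of the surface normal $N$. Since $\alpha$ is a spacelike curve with spacelike principal normal, I take $\varepsilon=1$ in the Frenet--Serret equations, so that $t'=\kappa n$, $n'=-\kappa t+\tau b$, $b'=\tau n$, with $\langle t,t\rangle=\langle n,n\rangle=1$ and $\langle b,b\rangle=-1$. By the definition in~[6], $\alpha$ is a slant helix iff its principal normal makes a constant Lorentzian angle with a fixed direction $d$; this is equivalent to saying that the principal-normal map $n:I\to S_{1}^{2}$ has image lying on a ``Lorentzian circle'' of the pseudo-sphere $S_{1}^{2}$, and hence that the geodesic curvature of $n(s)$ as a curve on $S_{1}^{2}$ is a constant function of $s$.

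The computation then runs parallel to that in Theorem~1. Differentiating gives $n'=-\kappa t+\tau b$, so $\langle n',n'\rangle=\kappa^{2}-\tau^{2}$, and the two cases of the statement correspond to the two possible causal characters of $n'$: when $\kappa^{2}>\tau^{2}$, $n'$ is spacelike and produces the denominator $(\kappa^{2}-\tau^{2})^{3/2}$, while $\tau^{2}>\kappa^{2}$ makes $n'$ timelike and produces $(\tau^{2}-\kappa^{2})^{3/2}$. A second differentiation yields
\begin{equation*}
n''=-\kappa' t+(\tau^{2}-\kappa^{2})n+\tau' b,
\end{equation*}
and from here one forms the Lorentzian cross product $n'\times n''$, computes its norm, and obtains the curvature $\kappa_{1}=\|n'\times n''\|/\|n'\|^{3}$ of the image curve $n(s)$ in $E_{1}^{3}$. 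Since $n$ lies on $S_{1}^{2}$ and is its outward unit normal, the normal curvature of $n(s)$ regarded as a curve on $S_{1}^{2}$ satisfies $\overset{-}{k_{n}}=\pm 1$, so substituting into the appropriate Lorentzian relation between $\kappa_{1}$, $\overset{-}{k_{g}}$ and $\overset{-}{k_{n}}$ (with signs dictated by the causal character of $n'$, exactly as in the derivation producing $\eta(s)$ in Theorem~1) and solving for $\overset{-}{k_{g}}$ recovers precisely the two functions displayed in the statement.

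The main obstacle is the careful bookkeeping of signs across the two causal cases: the Lorentzian norm of $n'$, the orientation of the cross product, and the sign in $\kappa_{1}^{2}=\pm(\overset{-}{k_{g}})^{2}\mp(\overset{-}{k_{n}})^{2}$ all flip between the two cases, so one must verify that no stray signs survive inside the square roots and that the expressions end up matching the statement verbatim. For the ``if'' direction one must, as in Sections~3 and~4, exhibit the axis $d$ explicitly as a linear combination of $t$, $n$, $b$ with coefficients depending on $\kappa$, $\tau$ and the constant value of the given function, and then check by direct differentiation using the Frenet--Serret equations that $d'=0$; the cancellations needed for $d'=0$ mirror the ones that already appeared for the Darboux frame in the earlier sections.
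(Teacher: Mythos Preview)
The paper does not actually prove this statement: Theorem~2 is quoted from reference~[1] (Ali and L\'opez) and no argument is given in the paper itself. So there is no in-paper proof to compare against.

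That said, your plan is the correct one and is precisely the Frenet-frame analogue of the paper's proof of Theorem~1: replace $N$ by $n$, the Darboux equations~(2.5) by the Frenet--Serret equations with $\varepsilon=1$, and compute the geodesic curvature of the spherical image $n:I\to S_{1}^{2}$. Your formulas $n'=-\kappa t+\tau b$, $\langle n',n'\rangle=\kappa^{2}-\tau^{2}$, and $n''=-\kappa' t+(\tau^{2}-\kappa^{2})n+\tau' b$ are all right, and the two causal cases for $n'$ produce exactly the two displayed functions. This is almost certainly the approach taken in~[1] as well, so your proposal is both correct and standard; just be aware that the paper you are working from treats the result as a citation rather than something it establishes.
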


\begin{theorem}
Let $\alpha $ be a unit speed spacelike isophote curve on a timelike surface
(The case 1(a), the case 1(b) and the case 2, respectively). Then,\newline
\textbf{(a)} $\alpha $ is a geodesic on the timelike surface if and only if $%
\alpha $ is a slant helix with the spacelike axis%
\begin{equation*}
d=\pm \frac{\tau }{\sqrt{\kappa ^{2}-\tau ^{2}}}\sinh \theta T\pm \frac{%
\kappa }{\sqrt{\kappa ^{2}-\tau ^{2}}}\sinh \theta B+\cosh \theta N.
\end{equation*}%
\textbf{(b)} $\alpha $ is a geodesic on the timelike surface if and only if $%
\alpha $ is a slant helix with the spacelike axis%
\begin{equation*}
d=\pm \frac{\tau }{\sqrt{\tau ^{2}-\kappa ^{2}}}\sin \beta T\pm \frac{\kappa 
}{\sqrt{\tau ^{2}-\kappa ^{2}}}\sin \beta B+\cos \beta N.
\end{equation*}%
\textbf{(c)} $\alpha $ is a geodesic on the timelike surface if and only if $%
\alpha $ is a slant helix with the timelike axis%
\begin{equation*}
d=\pm \frac{\tau }{\sqrt{\kappa ^{2}-\tau ^{2}}}\cosh \gamma T\pm \frac{%
\kappa }{\sqrt{\kappa ^{2}-\tau ^{2}}}\cosh \gamma B+\sinh \gamma N.
\end{equation*}
\end{theorem}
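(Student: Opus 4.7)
The plan is to prove part (a) carefully; parts (b) and (c) will follow by the same argument applied to the Case 1(b) and Case 2 formulas of Section 3 in place of (3.1). The argument rests on two already available characterizations: Theorem 1(1) equates the isophote condition to constancy of the Darboux-frame expression $\eta(s)$, and Theorem 2 equates the slant-helix condition to constancy of the Frenet-frame expression $\tfrac{\kappa^2}{(\kappa^2-\tau^2)^{3/2}}(\tau/\kappa)'$ (in the appropriate branch).

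First I would use the bridge (2.6) between Frenet and Darboux curvatures,
\begin{equation*}
k_g=\kappa\sinh\phi,\qquad k_n=\kappa\cosh\phi,\qquad \tau_g=\tau+\phi',
\end{equation*}
where $\phi$ is the angle between $N$ and $n$, to observe that $\alpha$ is a geodesic on $M$ iff $k_g=0$, equivalently $\phi\equiv 0$; under this hypothesis $k_n=\kappa$, $\tau_g=\tau$, and the Darboux and Frenet frames coincide (up to orientation).

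For the forward direction, assume $\alpha$ is a geodesic. The substitution $k_g=0$, $k_n=\kappa$, $\tau_g=\tau$ kills the $k_g/\sqrt{k_n^2-\tau_g^2}$ term in $\eta(s)$ and collapses the remaining piece to $\mp\tfrac{\kappa^2}{(\kappa^2-\tau^2)^{3/2}}(\tau/\kappa)'$, which is precisely the slant-helix invariant of Theorem 2. Since $\eta(s)$ is constant by hypothesis, $\alpha$ is a slant helix. Making the same substitution in the axis formula (3.1) produces the displayed expression for $d$.

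For the backward direction, assume $\alpha$ is a slant helix whose axis has the displayed form. Because $\alpha$ is also an isophote, its axis must simultaneously be given by (3.1). Matching the coefficients of the (unique) Darboux-frame expansion of $d$ yields
\begin{equation*}
\frac{\tau_g}{\sqrt{k_n^2-\tau_g^2}}=\frac{\tau}{\sqrt{\kappa^2-\tau^2}},\qquad \frac{k_n}{\sqrt{k_n^2-\tau_g^2}}=\frac{\kappa}{\sqrt{\kappa^2-\tau^2}},
\end{equation*}
from which, after aligning the $\pm$ branches of (3.1) with those in the theorem, one extracts $k_n=\kappa$ and $\tau_g=\tau$; by (2.6) this forces $\cosh\phi=1$, hence $\phi\equiv 0$, hence $k_g=0$, so $\alpha$ is a geodesic.

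I expect the main obstacle to be the backward direction. A naive comparison of the two coefficient equations above only yields the single ratio identity $\tau_g/k_n=\tau/\kappa$, and separating this into the two equalities $k_n=\kappa$ and $\tau_g=\tau$ requires careful sign and branch bookkeeping, together with the fact (coming from the $N$-coefficient already fixing the common angle $\theta$) that the $\pm$ sign choices in (3.1) and in the displayed axis must be consistent. Once this is settled, the chain $k_n=\kappa\Rightarrow\cosh\phi=1\Rightarrow\phi=0\Rightarrow k_g=0$ closes the argument, and the same template handles (b) using (3.4) and (c) using (3.6).
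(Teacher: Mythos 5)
Your proposal follows essentially the same route as the paper: the forward direction substitutes $k_{g}=0$, $k_{n}=\mp \kappa$, $\tau _{g}=\tau$ from Eq.(2.6) into $\eta (s)$ and the axis formula (3.1), and the converse reads $k_{n}=\mp \kappa$ and $\tau _{g}=\tau$ off the displayed axis to conclude $k_{g}=0$, exactly as the paper does. The coefficient-matching subtlety you flag in the backward direction (that the comparison a priori only pins down the ratio $\tau _{g}/k_{n}=\tau /\kappa$) is a genuine point, but the paper's own one-line converse passes over it in the same way, so your treatment is if anything slightly more careful than the original.
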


\begin{proof}
\textit{(a)} Since $\alpha $ is a geodesic, we have $k_{g}=0$. From Eq.(2.6)
it follows that $k_{n}=\mp \kappa $ and $\tau _{g}=\tau $. By substituting $%
k_{g}$ and $k_{n}$ in the expression of $\eta (s)$ we get%
\begin{equation*}
\eta (s)=\mp \left( \frac{\kappa ^{2}}{(\kappa ^{2}-\tau ^{2})^{\frac{3}{2}}}%
(\frac{\tau }{\kappa })^{^{\prime }}\right) (s)
\end{equation*}%
is a constant function. Then, from Theorem 2 $\alpha $ is a slant helix.
Because $k_{n}=\mp \kappa $ and $\tau _{g}=\tau $, using Eq.(3.1) we obtain
the spacelike axis of slant helix as%
\begin{equation*}
d=\pm \frac{\tau }{\sqrt{\kappa ^{2}-\tau ^{2}}}\sinh \theta T\pm \frac{%
\kappa }{\sqrt{\kappa ^{2}-\tau ^{2}}}\sinh \theta B+\cosh \theta N.
\end{equation*}%
Conversely, let $\alpha $ be a slant helix with the spacelike axis%
\begin{equation*}
d=\pm \frac{\tau }{\sqrt{\kappa ^{2}-\tau ^{2}}}\sinh \theta T\pm \frac{%
\kappa }{\sqrt{\kappa ^{2}-\tau ^{2}}}\sinh \theta B+\cosh \theta N.
\end{equation*}%
Then from Eq.(3.1) we get $k_{n}=\mp \kappa $ and $\tau _{g}=\tau $. This
means that $k_{g}=0$, i.e., $\alpha $ is a geodesic on the timelike surface.%
\newline
The proof of \textit{(b)} and \textit{(c) can be done similar to the proof
of (a).}
\end{proof}

\begin{theorem}
Let $\alpha $ be a unit speed timelike curve in $E_{1}^{3}$. Then $\alpha $
is a slant helix if and only if one of the two functions%
\begin{equation*}
\frac{\kappa ^{2}}{(\tau ^{2}-\kappa ^{2})^{\frac{3}{2}}}(\frac{\tau }{%
\kappa })^{^{\prime }}\text{ \  \  \  \ and \  \  \  \ }\frac{\kappa ^{2}}{(\kappa
^{2}-\tau ^{2})^{\frac{3}{2}}}(\frac{\tau }{\kappa })^{^{\prime }}
\end{equation*}%
is a constant everywhere $\tau ^{2}-\kappa ^{2}$ does not vanish $[1]$.
\end{theorem}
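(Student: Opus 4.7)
The plan is to mirror the argument used in the proof of Theorem 1, but with the Darboux frame of a surface curve replaced by the Frenet frame of a timelike space curve. By definition, $\alpha$ is a slant helix iff there exists a fixed unit vector $d$ such that $\langle n,d\rangle$ is constant along $\alpha$, where $n$ is the principal normal. Since $\alpha$ is timelike, $n$ is spacelike with $\langle n,n\rangle=1$, so the principal normal indicatrix $n\colon I\to S_1^2$ is a curve on the Lorentzian unit sphere, and the slant-helix condition is equivalent to this indicatrix being cut out by a fixed affine hyperplane, i.e.\ to its being a pseudo-circle on $S_1^2$, which in turn is equivalent to saying its geodesic curvature $\bar k_g$ on $S_1^2$ is constant. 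So the whole theorem reduces to computing $\bar k_g$ in closed form in terms of $\kappa$ and $\tau$ of $\alpha$.

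Carrying the computation out, the timelike Frenet-Serret equations from the Preliminaries give $n' = \kappa t + \tau b$, so $\langle n',n'\rangle = \tau^2-\kappa^2$, which is why one assumes $\tau^2-\kappa^2 \ne 0$. A second differentiation gives
\[
n'' = \kappa' t + \tau' b + (\kappa^2 - \tau^2)\, n.
\]
Forming the Lorentzian cross product $n'\times n''$ with the conventions $e_1\times e_2=-e_3$, $e_2\times e_3=e_1$, $e_3\times e_1=-e_2$ from Section~2, one finds that the $n$-component of $n'\times n''$ collapses to a multiple of $\kappa^2(\tau/\kappa)'$, while the tangential components contribute a factor of $\pm(\tau^2-\kappa^2)$. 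Inserting these into the standard formula $\kappa_{\mathrm{ind}} = \|n'\times n''\|/\|n'\|^3$ and using $\kappa_{\mathrm{ind}}^2 = \bar k_g^{\,2}-\bar k_n^{\,2}$ on $S_1^2$ together with $\bar k_n = \pm 1$, one arrives at
\[
\bar k_g(s) \;=\; \pm\,\frac{\kappa^2}{(\tau^2-\kappa^2)^{3/2}}\Bigl(\frac{\tau}{\kappa}\Bigr)'(s)
\quad\text{or}\quad
\pm\,\frac{\kappa^2}{(\kappa^2-\tau^2)^{3/2}}\Bigl(\frac{\tau}{\kappa}\Bigr)'(s),
\]
the branch being selected by the sign of $\tau^2-\kappa^2$, equivalently by whether the axis $d$ is spacelike or timelike. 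The forward direction of the theorem is then immediate, and the converse is handled by writing the candidate axis $d$ explicitly as a linear combination of $t,n,b$ with coefficients determined by $\kappa$, $\tau$, and the constant value of the displayed function (in the same spirit as formulas (3.1) and (3.6) in the Darboux-frame setting) and differentiating once to verify $d'=0$.

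The geometry is entirely routine; the obstacle I expect is sign bookkeeping. The Lorentzian signature forces a split into axis-spacelike and axis-timelike subcases, governed by the sign of $\tau^2-\kappa^2$, and the cross-product rule together with the absolute value built into $\|\cdot\|$ on $S_1^2$ must be tracked carefully so that the two branches in the statement appear with the correct placement of $(\tau^2-\kappa^2)^{3/2}$ versus $(\kappa^2-\tau^2)^{3/2}$ and with the correct ambient $\pm$. Once that bookkeeping is done, both directions of the equivalence follow from a single differentiation.
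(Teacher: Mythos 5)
This statement (Theorem 4) is not proved in the paper at all: it is imported verbatim from Ali and L\'opez \([1]\), so there is no in-paper proof to compare against. Your reconstruction --- computing the geodesic curvature of the spacelike principal normal indicatrix \(n(s)\) on \(S_1^2\) via \(n'=\kappa t+\tau b\), \(n''=\kappa' t+\tau' b+(\kappa^2-\tau^2)n\), the Lorentzian cross product, and \(\kappa_{\mathrm{ind}}^2=\bar k_g^{\,2}\mp\bar k_n^{\,2}\) with \(\bar k_n=\pm1\), then proving the converse by exhibiting the axis \(d\) explicitly and checking \(d'=0\) --- is sound and is exactly the method the paper itself uses for Theorem 1 in the Darboux-frame setting (and the method of \([1]\)). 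The only imprecision is your parenthetical claim that the choice of branch is ``equivalently'' governed by whether \(d\) is spacelike or timelike: when \(\kappa^2>\tau^2\) the axis is forced to be spacelike, but when \(\tau^2>\kappa^2\) both causal characters can occur, so the branch is determined by the sign of \(\tau^2-\kappa^2\) alone, with the causal type of \(d\) a separate case split; this does not affect the validity of the argument.
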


\begin{theorem}
A unit speed timelike curve on a timelike surface is an isophote curve if
and only if one of the following three functions%
\begin{eqnarray*}
(1)\text{ \ }\cot \delta &=&\sigma (s)=\mp \left( \frac{k_{n}^{2}}{%
(k_{n}^{2}-\tau _{g}^{2})^{\frac{3}{2}}}\left( \frac{\tau _{g}}{k_{n}}%
\right) ^{^{\prime }}+\frac{k_{g}}{(k_{n}^{2}-\tau _{g}^{2})^{\frac{1}{2}}}%
\right) (s) \\
(2)\text{ }\coth \xi &=&\rho (s)=\pm \left( \frac{k_{n}^{2}}{(\tau
_{g}^{2}-k_{n}^{2})^{\frac{3}{2}}}\left( \frac{\tau _{g}}{k_{n}}\right)
^{^{\prime }}+\frac{k_{g}}{(\tau _{g}^{2}-k_{n}^{2})^{\frac{1}{2}}}\right)
(s) \\
(3)\text{ }\coth \nu &=&\omega (s)=\pm \left( \frac{k_{n}^{2}}{(\tau
_{g}^{2}-k_{n}^{2})^{\frac{3}{2}}}\left( \frac{\tau _{g}}{k_{n}}\right)
^{^{\prime }}-\frac{k_{g}}{(\tau _{g}^{2}-k_{n}^{2})^{\frac{1}{2}}}\right)
(s)
\end{eqnarray*}%
is a constant function (The case 3(a), the case 3(b) and the case 4,
respectively).\newline
The proof of Theorem 5 is similar to Theorem 1.
\end{theorem}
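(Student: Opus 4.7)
The plan is to follow the scheme used to establish Theorem 1, substituting the Darboux equations (2.7) for a timelike curve in place of (2.5) and keeping careful track of the causal changes that this substitution produces. The ``$\Rightarrow$'' direction of each branch was essentially obtained already in Section 4 (equations (4.1), (4.2), (4.3) were derived under the assumption that $d$ is constant), so the real content of Theorem 5 is the converse: if one of $\sigma$, $\rho$, $\omega$ is constant, then $\alpha$ is an isophote.

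My first step is to recast the isophote condition geometrically. Since $\alpha$ is an isophote iff $N_{\mid_\alpha}$ makes a constant Lorentzian angle with a fixed direction $d$, its trace on the Lorentzian unit sphere $S_1^2$ is a latitude circle; and, exactly as in Theorem 1, this happens iff the geodesic curvature $\overline{k_g}$ of $N_{\mid_\alpha}$ computed inside $S_1^2$ is constant. Thus the task reduces to computing $\overline{k_g}$ in terms of $k_n,k_g,\tau_g$ and matching it, in each of the three sub-cases, with the right-hand sides of (1), (2), (3).

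The core computation proceeds from
\[
N_{\mid_\alpha}'=k_n T+\tau_g B,
\]
and, after differentiating once more using (2.7),
\[
N_{\mid_\alpha}''=(k_n'+\tau_g k_g)\,T+(k_n k_g+\tau_g')\,B+(k_n^2-\tau_g^2)\,N,
\]
where (compared with the spacelike-curve case in Theorem 1) the sign of the coefficient of $N$ is reversed. Using the Lorentzian cross-product identities among $T,B,N$ appropriate to this frame (now with $\langle T,T\rangle=-1$, $\langle B,B\rangle=\langle N,N\rangle=1$), the coefficient of $N$ in $N_{\mid_\alpha}'\times N_{\mid_\alpha}''$ will again simplify to an expression of the form $\pm\bigl(k_g(k_n^2-\tau_g^2)+k_n^2(\tau_g/k_n)'\bigr)$, while $\langle N_{\mid_\alpha}',N_{\mid_\alpha}'\rangle=\tau_g^2-k_n^2$ records whether the spherical image is spacelike or timelike on $S_1^2$.

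The main obstacle, and the step requiring the most care, is the case split. Depending on the sign of $\tau_g^2-k_n^2$ and on the causal type of $d$, the identity relating $\kappa$, $\overline{k_g}$ and $\overline{k_n}$ changes sign: on $S_1^2$ one still has $\overline{k_n}^{\,2}=1$, but the three-term relation differs for spacelike and timelike spherical curves. In each of the three branches 3(a), 3(b), 4 I would solve for $\overline{k_g}$ from the appropriate identity and compare with the explicit expressions derived for $\cot\delta$, $\coth\xi$ and $\coth\nu$ in (4.1), (4.2), (4.3); the matching identifies $\overline{k_g}$ with $\sigma$, $\rho$, or $\omega$ respectively, and the equivalence of Theorem 5 follows, since $\overline{k_g}$ is constant iff $N_{\mid_\alpha}$ traces a circle on $S_1^2$ iff $\alpha$ is an isophote with the prescribed type of axis.
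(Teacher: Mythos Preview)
Your proposal is correct and follows essentially the same approach that the paper intends: the paper gives no independent proof of Theorem~5, merely stating that it is ``similar to Theorem~1,'' and your outline is precisely the Theorem~1 argument transported to the timelike-curve setting via the Darboux equations~(2.7), with the expected sign changes in $N_{\mid_\alpha}''$ and in $\langle N_{\mid_\alpha}',N_{\mid_\alpha}'\rangle=\tau_g^2-k_n^2$ correctly identified. Your discussion of the case split and of the relation between the spherical geodesic curvature and the three functions $\sigma,\rho,\omega$ is in fact more explicit than anything the paper writes down for either theorem.
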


\begin{theorem}
Let $\alpha $ be a unit speed timelike isophote curve on a timelike surface
(The case 3(a), the case 3(b) and the case 4, respectively). Then,\newline
\textbf{(a)} $\alpha $ is a geodesic on the timelike surface if and only if $%
\alpha $ is a slant helix with the spacelike axis%
\begin{equation*}
d=\pm \frac{\tau }{\sqrt{\kappa ^{2}-\tau ^{2}}}\sin \delta T\pm \frac{%
\kappa }{\sqrt{\kappa ^{2}-\tau ^{2}}}\sin \delta B+\cos \delta N.
\end{equation*}%
\textbf{(b)} $\alpha $ is a geodesic on the timelike surface if and only if $%
\alpha $ is a slant helix with the spacelike axis%
\begin{equation*}
d=\pm \frac{\tau }{\sqrt{\tau ^{2}-\kappa ^{2}}}\sinh \xi T\pm \frac{\kappa 
}{\sqrt{\tau ^{2}-\kappa ^{2}}}\sinh \xi B+\cosh \xi N.
\end{equation*}%
\textbf{(c)} $\alpha $ is a geodesic on the timelike surface if and only if $%
\alpha $ is a slant helix with the timelike axis%
\begin{equation*}
d=\pm \frac{\tau }{\sqrt{\tau ^{2}-\kappa ^{2}}}\cosh \nu T\pm \frac{\kappa 
}{\sqrt{\tau ^{2}-\kappa ^{2}}}\cosh \nu B+\sinh \nu N.
\end{equation*}%
The proof of Theorem 6 can be done similar to Theorem 3.
\end{theorem}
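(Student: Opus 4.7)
The plan is to mirror the proof of Theorem 3, swapping the spacelike Frenet--Darboux data for their timelike analogues and invoking Theorem 4 (instead of Theorem 2) as the slant helix characterization, together with the axis formulas from Section 4. For each of (a), (b), (c) the argument splits into a forward implication, where $k_g=0$ is used to collapse the isophote invariant to one of the scalars of Theorem 4, and a converse, where the announced form of the axis is matched against the corresponding Eq.~(4.1)--(4.3) to read off $k_g=0$.

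For (a) I would start by assuming $\alpha$ is a geodesic, so $k_g=0$. From Eq.~(2.8) the identity $k_g=\kappa\cos\phi$ forces $\cos\phi=0$, hence $k_n=\kappa\sin\phi=\pm\kappa$, and $\tau_g=\tau+\phi'=\tau$ since $\phi$ is constant. Substituting $k_g=0$, $k_n^2=\kappa^2$ and $\tau_g/k_n=\pm\tau/\kappa$ into the function $\sigma(s)$ of Theorem 5 collapses it to $\mp\frac{\kappa^2}{(\kappa^2-\tau^2)^{3/2}}(\tau/\kappa)'$, which is one of the two scalars whose constancy characterizes a timelike slant helix in Theorem 4; since Theorem 5 already guarantees $\sigma(s)$ is constant, $\alpha$ is a slant helix. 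Feeding $k_n=\pm\kappa$ and $\tau_g=\tau$ back into the axis formula from Eq.~(4.1) then yields the claimed spacelike axis. For the converse I would compare the given axis coefficient-by-coefficient against Eq.~(4.1) to deduce $k_n=\pm\kappa$ and $\tau_g=\tau$, and then conclude $k_g=0$ from $\kappa^2=k_g^2+k_n^2$.

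Parts (b) and (c) proceed by the same mechanism, but using Eq.~(4.2) and the function $\rho(s)$ in place of Eq.~(4.1) and $\sigma(s)$ for (b), and Eq.~(4.3) and $\omega(s)$ for (c). In both cases the same substitutions reduce $\rho(s)$ and $\omega(s)$ to the scalar $\pm\frac{\kappa^2}{(\tau^2-\kappa^2)^{3/2}}(\tau/\kappa)'$, which is the second characterization in Theorem 4, and the axis is read off from Eq.~(4.2) or Eq.~(4.3) after the same substitutions. The converse directions again amount to matching coefficients against the corresponding equation.

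The main obstacle is not analytic but is rather sign bookkeeping: one must track how the $\pm$ in $k_n=\pm\kappa$ interacts with the $\mp$ in $\sigma(s)$, $\rho(s)$ and $\omega(s)$, and how the sign of the square root in Eq.~(4.1)--(4.3) propagates to the axis formula stated in (a)--(c). A subtler point is checking that the regimes $k_n^2>\tau_g^2$ versus $\tau_g^2>k_n^2$ correspond correctly to cases 3(a), 3(b) and 4, so that upon the substitution $k_n=\pm\kappa$, $\tau_g=\tau$ the square roots in the final axis formulas are real. Once these sign and regime issues are resolved, the substitutions are completely mechanical and the argument reduces to rewriting the reductions already carried out in the proof of Theorem 3.
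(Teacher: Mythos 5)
Your proposal is correct and follows exactly the route the paper intends: the paper gives no separate argument for Theorem 6 beyond stating that it is "similar to Theorem 3," and your substitutions ($k_g=0$ via Eq.~(2.8) forcing $\cos\phi=0$, hence $k_n=\pm\kappa$, $\tau_g=\tau$, then reducing $\sigma,\rho,\omega$ to the Theorem 4 scalars and reading the axis off Eqs.~(4.1)--(4.3), with the converse by matching coefficients) are precisely the timelike analogue of the paper's proof of Theorem 3(a). Your flagged concerns about sign and regime bookkeeping are legitimate but do not affect the validity of the argument.
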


\end{document}